\theoremstyle{plain}
\newtheorem{theorem}{Theorem}
\newtheorem{lemma}[theorem]{Lemma}
\theoremstyle{definition}
\newtheorem{remark}[theorem]{Remark}
\numberwithin{equation}{section}
\numberwithin{theorem}{section}
\def\geq{\geqslant}
\def\leq{\leqslant}
\begin{document}

\title[dispersive equations]
{Periodic Schr\"odinger map flow on  K\"ahler manifolds }	

\author[S. Wang]{Sheng Wang}
\address{Shanghai Center for Mathematical Sciences, Fudan University, Shanghai 200433, P.R. China}
\email{19110840011@fudan.edu.cn}

\author[Y. Zhou]{Yi Zhou}
\address{School of Mathematical Sciences, Fudan University, Shanghai 200433, P.R. China}
\email{yizhou@fudan.edu.cn}

\subjclass[2010]{Primary: 35Q55; Secondary: 35B44}
\date{\today}

\keywords{Wei-Yue Ding's Conjecture;  K\"ahler manifolds; Div-Curl Lemma; Gobal Well-posedness.}

\maketitle

\begin{abstract}

Wei-Yue Ding \cite{Ding 2002} proposeed a proposition about Schr\"odinger map flow in 2002 International Congress of Mathematicians in Beijing, which is called Wei-Yue Ding conjecture by Rodnianski-Rubinstein-Staffilani \cite{Rodnianski 2009}.
They proved \cite{Rodnianski 2009} that Schr\"odinger map flow for maps from the real line into  K\"ahler manifolds and for maps from the circle into Riemann surfaces is globally well-posed  which is the first significant advance in this conjecture by translating the Schr\"odinger map flow into  nonlinear Schr\"odinger-type  equations or (systems) and partially solved this conjecture.  In this article, we will derive a new div-curl type lemma and combined it with energy and ``momentum" balance law to get some space-time estimates.
Based on this, we prove the Schr\"odinger map flow for maps from the circle into K\"ahler manifolds is globally regular. So far, the Wei-Yue Ding's conjecture has been completely solved.


\end{abstract}

\section {Introduction}

Let $(M,g)$ be an  $m$-dimensional complete Riemannian manifold,
and let
$(N,\omega,J,h)$ be a  $2n$-dimensional complete symplectic manifold  with a compatible almost
complex structure $J$, which means $(J\cdot,J\cdot)=(\cdot,\cdot)$ and 
$h(\cdot,\cdot)=\omega(\cdot,J\cdot)$. This defines a complete Riemannian metric on $N$.

We introduce the Fr\'echet manifold $X:=C^\infty(M,N)$ which is the space of all smooth maps from $M$ to $N$,
endowed with a symplectic structure,
\begin{equation}
	\Omega(V,W)|_u=\int_M u^{\star}\omega(V,W) d V_{M,g}, \quad \forall\; V,W\in T_uX=\Gamma(M,u^{\star}TN),
\end{equation}
where the tangent space to $X$ at a map $u:M\rightarrow N$ is the space of
smooth sections of $u^{\star}TN \rightarrow M$ and where $dV_{M,g}$ denotes the
volume form on $M$ induced by Riemannian metric $g$. The form $\Omega$ is
non-degenerate, i.e., endows $X$ with an injective map $TX\rightarrow
T^{\star} X$.

The energy function on $X$ can be represented by
\begin{equation}\label{Enr}
\mathcal{E}(u)=\frac{1}{2} \int_M |du|_{g^\sharp\otimes u^{\star}h}^2 dV_{M,g},
\end{equation}
where we denote by $g^\sharp$ the metric induced by $g$ on
$T^{\star} M$ and where we view $du$ as a section of $T^{\star}
M \otimes u^{\star}TN \rightarrow M$ and equip this bundle with the metric
$g^\sharp \otimes u^{\star}h$.

As the similar arguments in  \cite{Rodnianski 2009}, by Hamilton's principle, we can obtain the corresponding Hamiltonian flow:
\begin{equation}
\left\{
\begin{aligned}
& \partial_t u =-J\tau(u),\\
& u|_{\{0\}\times M}=u_0,
\end{aligned}
\right.
\label{schf}
\end{equation}
on $\left(X, \Omega\right)$ from \eqref{Enr}, where
$\tau(u):=tr_{g^\sharp} \nabla du$ is called the tension field of $u$ and $\nabla$ is the connection
on $T^\star M\otimes u^{\star}TN \rightarrow M$ induced from the Levi-Civita
connection on $(M,g)$ and the pulled-back one from $(N,h)$.

In fact, the Hamiltonian flow \eqref{schf} is also called  Schr\"odinger map flow, introduced by Ding-Wang in \cite{Ding 1998}.  

An interesting problem  about this flow \eqref{schf} is the existence and uniqueness. But if we consider the general case, we may not have the global existence or well-posedness because this flow is defined on infinite-dimensional spaces. So we usually restrict to the K\"ahler case. Ding-Wang \cite{Ding 2001} proved the local well-posedness, and then McGahagan \cite{McGahagan 2007} gave the similar results with another way by parabolic approximation. This result can be described as follows:
\begin{theorem}[Local well-posedness]
Suppose $\left(M, g\right)$ is an $m$-dimensional  complete Riemannian manifold and $\left(N, J, h\right)$ is a complete K\"ahler manifold with bounded geometry. If the initial data $u_0\in H^k\left(M, N\right)$ with $k>\frac{m}{2}+1$, then the system \eqref{schf} admits a unique  flow $u\in C^0 \left(\left[0, T\right], H^k\left(M, N\right) \right)$ where $T<T_0$ and $T_0$ only depends on the geometry of the manifold $N$ and $\left \Vert \nabla u_0\right\Vert_{H^{\left[m/2\right]+1}}$. In addition, we have the following estimates:
\begin{equation*}
\left \Vert \nabla u\right\Vert_{H^{\left[m/2\right]+1}} \leq \frac{C_1}{\left(T_0-t\right)^{C_2}} \quad \forall t \in\left[0, T_0\right),
\end{equation*}
where the constants $C_1$ and $C_2$  only depend on  the geometry of the manifold $N$ and $\left \Vert \nabla u_0\right\Vert_{H^{\left[m/2\right]+1}}$. Particularly, when the initial data $u_0\in H^k\left(M, N\right)$ with any $k\geq2$, then the flow $u\in C^{\infty}\left(\left[0,T\right]\times M, N\right)$.
\end{theorem}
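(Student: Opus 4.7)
The plan is to follow the parabolic-regularization strategy of McGahagan. For $\epsilon>0$, replace \eqref{schf} by
\[
\partial_t u_\epsilon = -J\tau(u_\epsilon) + \epsilon\,\tau(u_\epsilon), \qquad u_\epsilon|_{t=0}=u_0,
\]
which is strictly parabolic since $\epsilon\,\tau$ is (up to sign) the harmonic-map heat flow principal part. For each fixed $\epsilon>0$, classical theory on a manifold of bounded geometry (Galerkin in $M$, or contraction mapping in a tubular-neighborhood extrinsic formulation) gives a short-time $H^k$ solution. The task is then to extract $\epsilon$-uniform bounds on a fixed interval $[0,T_0)$.

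The core is an energy estimate performed intrinsically using the pull-back connection $\nabla$ on $u_\epsilon^\star TN$. I would compute
\[
\tfrac{1}{2}\tfrac{d}{dt}\int_M |\nabla^{\ell+1} u_\epsilon|^2\,dV_{M,g}
\]
for $0\le\ell\le[m/2]+1$. At the top order the Schr\"odinger part produces a term of the form $\int \langle \nabla^\ell(J\tau(u_\epsilon)), \nabla^\ell \tau(u_\epsilon)\rangle$. Because $(N,J,h)$ is K\"ahler, $\nabla J=0$, so $J$ commutes with $\nabla$, and since $J$ is a skew isometry this scalar product vanishes pointwise, killing the top-order contribution. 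The parabolic perturbation contributes a non-positive $-\epsilon\|\nabla^{\ell+1}\tau\|_{L^2}^2$. All remaining terms are curvature commutators for the bundle $u_\epsilon^\star TN$, which, under bounded geometry of $N$, are controlled by polynomial expressions in lower-order norms of $\nabla u_\epsilon$; since $k>m/2+1$ the Sobolev embedding $H^{[m/2]+1}(M)\hookrightarrow L^\infty(M)$ lets me absorb them into a Gronwall-type inequality
\[
\tfrac{d}{dt}\|\nabla u_\epsilon\|_{H^{[m/2]+1}}^2 \le C\bigl(1+\|\nabla u_\epsilon\|_{H^{[m/2]+1}}\bigr)^{\alpha},
\]
with $C,\alpha$ depending only on the geometry of $N$. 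Integrating this ODE gives exactly the stated blow-up rate $C_1(T_0-t)^{-C_2}$ on an interval $[0,T_0)$ independent of $\epsilon$.

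To conclude, I pass to the limit $\epsilon\downarrow 0$ by Aubin--Lions, extracting a subsequence $u_{\epsilon_j}$ converging strongly in $C^0_tH^{k-1}_{\mathrm{loc}}$ and weakly-$\star$ in $L^\infty_tH^k$; bounded geometry plus Sobolev embedding make $J\tau(u)$ continuous under this convergence, so the limit solves \eqref{schf}. For uniqueness of two solutions $u,\tilde u$ with the same data, I work in a tubular neighborhood, write $\tilde u=\exp_u V$ with $V\in\Gamma(u^\star TN)$, and close a Gronwall inequality for $\|V\|_{H^{k-1}}$ using the same commutator bookkeeping. Propagation of regularity to arbitrary $k\ge 2$, and hence $C^\infty$ smoothness when $u_0$ is smooth, follows by bootstrapping the same scheme at each derivative level.

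The principal obstacle is the energy estimate itself: unlike the purely parabolic case, the Schr\"odinger principal part $-J\tau$ provides no smoothing, so every derivative placed on $u_\epsilon$ must be compensated by dual integration by parts. It is precisely the K\"ahler identity $\nabla J=0$ that renders the resulting top-order bilinear form antisymmetric in $\nabla^{\ell+1}u_\epsilon$ and makes it vanish; without this the scheme would lose a derivative at each order. Controlling the sub-principal curvature commutators requires the bounded-geometry hypothesis on $N$ to ensure that $\nabla^j R^N$ and $\nabla^j J$ (which is zero but whose brackets with curvature appear) are uniformly bounded along any trajectory.
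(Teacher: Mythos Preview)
The paper does not supply its own proof of this theorem: it is quoted as background, with the attribution ``Ding--Wang \cite{Ding 2001} proved the local well-posedness, and then McGahagan \cite{McGahagan 2007} gave the similar results with another way by parabolic approximation.'' There is thus no in-paper argument to compare against.

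Your proposal is precisely the McGahagan scheme cited by the authors: the $\epsilon$-regularized flow $\partial_t u_\epsilon=-J\tau(u_\epsilon)+\epsilon\,\tau(u_\epsilon)$, $\epsilon$-uniform intrinsic energy estimates exploiting $\nabla J=0$ to kill the top-order Schr\"odinger bilinear form, curvature commutators controlled via bounded geometry and Sobolev embedding, and a compactness limit plus tubular-neighborhood uniqueness. As a high-level sketch this is correct and faithful to the literature. The other cited proof, Ding--Wang \cite{Ding 2001}, proceeds instead by a nonlinear semigroup/approximation argument (working with a sequence of approximate equations and energy identities directly for the Schr\"odinger flow rather than a parabolic perturbation); both routes hinge on the same K\"ahler cancellation you identify, but Ding--Wang avoid the auxiliary $\epsilon\to 0$ limit at the cost of a more delicate functional-analytic setup, while the parabolic route you chose trades that for the compactness step. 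Either is acceptable here since the paper treats the statement as imported.
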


We recall the  Wei-Yue Ding's conjecture:
\subsection{Conjecture \cite{Ding 2002}} 
 
The Schr\"odinger map flow is globally well-posed for maps from one-dimensional domains into compact K\"ahler manifolds.

Rodnianski-Rubinstein-Staffilani \cite{Rodnianski 2009} first studied this conjecture, and have taken an important step forward in the research of the conjecture. They proved that the Schr\"odinger map flow for maps from the real line into  K\"ahler manifolds and for maps from the circle into Riemann surfaces is globally well-posed. These results can be described as follows:

\begin{theorem}
	Suppose $\left(M, g\right)= \left(\mathbb{R}, dx \otimes dx \right)$ and $\left(N, J, h\right)$ is a complete K\"ahler manifold with bounded geometry. If the initial data $u_0\in H^k\left(\mathbb{R}, N\right)$ with $k\geq 2$, then the system \eqref{schf} admits a unique  flow $u\in C^0 \left(\left[0, \infty\right), H^k\left(\mathbb{R}, N\right) \right)$.  Particularly, when the initial data $u_0\in H^k\left(\mathbb{R}, N\right)$ with any $k\geq2$, then the flow $u\in C^{\infty}\left(\left[0,\infty \right)\times \mathbb{R}, N\right)$. 
\end{theorem}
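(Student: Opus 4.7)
The plan is to reduce the Schr\"odinger map equation on $\mathbb{R}$ to a closed nonlinear Schr\"odinger-type system for a complex-valued unknown $q$, and then close an a priori $H^k$ bound using energy conservation together with standard one-dimensional NLS energy estimates. Combined with the local well-posedness theorem above, any such bound on an interval $[0,T]$ extends the solution to $[0,\infty)$ by the continuation principle.

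First, I would record that the flow \eqref{schf} is the Hamiltonian flow of $\mathcal{E}$ with respect to $\Omega$, so $\mathcal{E}(u(t))\equiv\mathcal{E}(u_0)$; in particular $\|\partial_x u(t)\|_{L^2(\mathbb{R})}$ is conserved. Along each time slice I would choose an orthonormal frame $\{e_\alpha(t,x)\}_{\alpha=1}^{2n}$ on $u(t)^{\star}TN$ which is $x$-parallel ($\nabla_x e_\alpha=0$) and $J$-compatible ($Je_{2k-1}=e_{2k}$); such a frame is well-defined once prescribed at a single base point, since the $x$-parallel-transport ODE is globally integrable. Writing $\partial_x u=\sum_\alpha q^\alpha e_\alpha$ and grouping the real pairs into complex coordinates yields $q:\mathbb{R}_t\times\mathbb{R}_x\to\mathbb{C}^n$ with $|q|=|\partial_x u|_{u^{\star}h}$, hence $\|q(t)\|_{L^2}=\|\partial_x u(t)\|_{L^2}$ is conserved.

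A short computation, using $\partial_t u=-J\tau(u)=-J\nabla_x\partial_x u$, the fact that $\nabla J=0$ on the K\"ahler target (so that $J$ commutes with $\nabla_x$ and $\nabla_t$), and the curvature identity $[\nabla_t,\nabla_x]=R^N(\partial_t u,\partial_x u)$, converts the Schr\"odinger map flow into a system of the form
\begin{equation*}
i\partial_t q+\partial_x^2 q=F(q,\bar q),
\end{equation*}
where $F$ is an algebraic expression (cubic in $q$ modulo a harmless gauge term) whose coefficients are pullbacks of $R^N$ through $u$. Under the bounded-geometry hypothesis on $N$, these coefficients and their higher covariant derivatives are uniformly bounded. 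From here the standard one-dimensional NLS machinery closes the bootstrap: $L^2$-control of $q$ is free, an $H^1$ energy estimate gains control of $\partial_x q$ via Gronwall after using $H^1\hookrightarrow L^\infty$ on $\mathbb{R}$ to absorb the cubic nonlinearity, and higher Sobolev norms follow by induction in $k$. Translating the resulting bound back through the frame (whose $x$-derivatives vanish and whose $t$-derivatives are themselves controlled by $q$ and $R^N$) yields the desired a priori $H^k$ bound for $u$.

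The main obstacle I expect is the gauge reduction: one must verify that the ``time connection coefficients" $a_t^{\alpha\beta}$ coming from $\nabla_t e_\alpha=a_t^{\beta\alpha}e_\beta$ can, via the parallel condition in $x$, be solved for in terms of $q,\bar q$ and the pulled-back curvature, so that the reduced equation is truly a closed PDE in $q$ rather than a coupled system involving the frame itself. This is exactly where both the K\"ahler structure (giving $[\nabla,J]=0$) and the bounded-geometry assumption (giving uniform pointwise control on $R^N$ and its derivatives as $u$ ranges over the non-compact target) are essential.
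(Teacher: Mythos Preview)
Your frame-reduction outline follows the strategy of Rodnianski--Rubinstein--Staffilani \cite{Rodnianski 2009}, to whom the paper attributes this theorem, but the step where you close the $H^1$ bound ``via Gronwall after using $H^1\hookrightarrow L^\infty$'' is a genuine gap. Schematically the cubic nonlinearity gives
\[
\frac{d}{dt}\,\|\partial_x q\|_{L^2}^2 \;\lesssim\; \|q\|_{L^\infty_x}^2\,\|\partial_x q\|_{L^2}^2,
\]
and estimating $\|q\|_{L^\infty_x}$ by $\|q\|_{H^1}$ (or by $\|q\|_{L^2}^{1/2}\|\partial_x q\|_{L^2}^{1/2}$ via Gagliardo--Nirenberg) yields a \emph{superlinear} differential inequality of the form $X'\lesssim X^{2}$ (respectively $X'\lesssim m^{1/2}X^{3/2}$), whose solutions may blow up in finite time. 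You therefore obtain only a local bound on an interval whose length shrinks with $\|\partial_x q(0)\|_{L^2}$ rather than depending solely on the conserved mass $\|q\|_{L^2}$, so the continuation argument cannot be iterated to reach arbitrary $T$. What is actually needed is a space--time control on $\|q(t)\|_{L^\infty_x}$ (equivalently on $|\partial_x u|_g$) that depends only on conserved quantities; in \cite{Rodnianski 2009} this is supplied by Strichartz estimates for the Schr\"odinger group on $\mathbb{R}$, giving an $L^4$-type a~priori bound on intervals of length determined by $\|q\|_{L^2}$ alone. Your sketch never invokes any such dispersive estimate. The ``main obstacle'' you flag---solving for the time-connection coefficients $a_t$---is a real technical point but not the crux; the crux is this missing space--time input.

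The paper itself does not reprove this statement in the body (it is quoted as a result of \cite{Rodnianski 2009}), but it indicates an alternative route via the real-line div--curl Lemma~\ref{dcr}: working directly with the geometric balance laws \eqref{balance 1} and \eqref{balance2}, without any frame reduction or Strichartz estimate, one obtains a space--time bound on $\int_0^T\!\int_{\mathbb{R}}|\langle\nabla_x\partial_x u,\partial_x u\rangle_g|^2$ depending only on $m(0)$, and this substitutes for the dispersive input in the energy argument of Section~\ref{gwp}. So even after repairing your proposal by inserting Strichartz, you would be reproducing the cited proof rather than the paper's own method, whose point is precisely that the NLS reduction can be bypassed.
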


\begin{theorem}
	Suppose $\left(M, g\right)= \left(\mathbb{S}^1, dx \otimes dx \right)$ and $\left(N, J, h\right)$ is a complete Riemann surface with bounded geometry. If the initial data $u_0\in H^k\left(\mathbb{S}^1, N\right)$ with $k\geq 2$, then the system \eqref{schf} admits a unique  flow $u\in C^0 \left(\left[0, \infty\right), H^k\left(\mathbb{S}^1, N\right) \right)$.  Particularly, when the initial data $u_0\in H^k\left(\mathbb{S}^1, N\right)$ with any $k\geq2$, then the flow $u\in C^{\infty}\left(\left[0,\infty \right)\times \mathbb{S}^1, N\right)$. 
\end{theorem}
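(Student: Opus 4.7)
The plan is to reduce the geometric Schrödinger map flow on $\mathbb{S}^1$ into a Riemann surface to a scalar nonlinear Schrödinger equation on the circle, exploiting the fact that the target has complex dimension one so $u^{\star}TN$ is a (topologically trivial) complex line bundle over $\mathbb{S}^1$. Given the local well-posedness theorem together with its blow-up alternative, it suffices to rule out finite-time blow-up of $\|u(t)\|_{H^k}$, which I would do via an a priori bound derived from this reduction.

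First I would exploit the Hamiltonian nature of the flow to conclude energy conservation: $\mathcal{E}(u(t)) = \mathcal{E}(u_0)$, hence $\|\pa_x u(t)\|_{L^2(\mathbb{S}^1)}^2 = 2\mathcal{E}(u_0)$ on the maximal interval $[0,T^*)$. Next I would set up a gauge: choose a smooth orthonormal frame $\{e_1(t,x),\, e_2(t,x)=Je_1\}$ along the map with $\nabla^u_x e_1 \equiv 0$ (parallel along $x$). The holonomy around $\mathbb{S}^1$ can be absorbed into a constant phase and then removed by a $t$-dependent $\mathrm{SO}(2)$ rotation, so that the frame is globally well defined on $[0,T^*)\times\mathbb{S}^1$. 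Writing $\pa_x u = \alpha e_1 + \beta e_2$ and setting $q := \alpha + i\beta$, one has $|q|^2 = |\pa_x u|^2$ and therefore $\|q(t)\|_{L^2}^2 = 2\mathcal{E}(u_0)$.

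A direct computation using $\pa_t u = -J\tau(u) = -J\nabla^u_x \pa_x u$, the torsion-free identity $\nabla^u_t \pa_x u = \nabla^u_x \pa_t u$, and the parallelism of $J$ yields
\[
i\pa_t q + \pa_x^2 q + A_0\, q = 0,
\]
where $A_0 := h(\nabla^u_t e_1, e_2)$ is the temporal connection coefficient. The structure equation $[\nabla^u_t, \nabla^u_x]e_1 = R^N(\pa_t u,\pa_x u)e_1$, combined with the Riemann surface curvature identity $R^N(X,Y)Z = K(u)\bigl(h(Y,Z)X - h(X,Z)Y\bigr)$, forces
\[
\pa_x A_0 = \tfrac{1}{2}\, K(u)\, \pa_x |q|^2,
\]
which (with a gauge choice for the constant of integration) recovers a 1D cubic NLS-type equation on the circle for $q$, with $\tfrac{1}{2}K(u)|q|^2 q$ plus nonlocal lower-order terms, all with bounded coefficients by the bounded-geometry hypothesis. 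With this equation in hand, $L^2$-conservation and the Hamiltonian of the reduced NLS, together with Gagliardo--Nirenberg on $\mathbb{S}^1$, give $\|q(t)\|_{H^1}\le C(\mathcal{E}(u_0))$ uniformly in time; since $H^1(\mathbb{S}^1)\hookrightarrow L^\infty$, standard persistence of regularity for 1D NLS on the torus upgrades this to $\|q(t)\|_{H^{k-1}}\le C(T,\|u_0\|_{H^k})$ on every finite $[0,T]\subset [0,T^*)$. Reconstructing $u$ from $q$ via $\pa_x u = (\Re q)e_1 + (\Im q)e_2$ and the ODE for the frame then bounds $\|u(t)\|_{H^k}$ and contradicts the blow-up alternative, so $T^* = \infty$.

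The main obstacle I anticipate is the gauge setup on the compact circle. Unlike the real-line case, a global $x$-parallel frame does not automatically exist — one must either pass to a covering space or quantize the holonomy away by a time-dependent rotation, and then verify that the resulting connection coefficient $A_0$ inherits enough smoothness and boundedness from the bounded-geometry assumption on $N$ so that the reduced scalar NLS genuinely admits the classical persistence-of-regularity theory. Once this gauge reduction is carried out cleanly, the closing argument is a soft bootstrap using well-known results for 1D NLS on $\mathbb{S}^1$.
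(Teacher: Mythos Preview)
Your strategy is the Rodnianski--Rubinstein--Staffilani route: use a parallel frame (modulo holonomy) to convert the geometric flow into a scalar NLS for $q$ and then close with 1D periodic NLS theory. The paper takes a completely different path: it never passes to a scalar equation. Instead it derives two balance laws, one for $\langle \partial_x u,\partial_x u\rangle_g$ and one for $\langle J\partial_x u,\nabla_x\partial_x u\rangle_g$, proves a new periodic div--curl lemma to extract space--time integrability of $|\langle\nabla_x\partial_x u,\partial_x u\rangle_g|^2$, and uses this to control the curvature--derivative error terms in the $H^2$ energy identity directly on the geometric side. Your gauge approach is conceptually lighter for Riemann surface targets but needs genuine harmonic--analytic input on $\mathbb{S}^1$; the paper's approach avoids Strichartz/$X^{s,b}$ entirely and works verbatim for higher--dimensional K\"ahler targets, where no scalar reduction is available.

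There is, however, a real gap in your execution. You assert that ``$L^2$-conservation and the Hamiltonian of the reduced NLS, together with Gagliardo--Nirenberg, give $\|q(t)\|_{H^1}\le C(\mathcal{E}(u_0))$ uniformly in time.'' This fails for non-constant Gauss curvature: the coefficient $K(u(t,x))$ is time-dependent through the solution, so no functional of the form $\int|\partial_x q|^2 - \tfrac14\int K(u)|q|^4$ is conserved. Differentiating it in $t$ produces precisely the error $\int (\nabla K\cdot\partial_t u)\,|q|^4 \sim \int|\nabla_x\partial_x u|\,|\partial_x u|^4$, which under a naive Gagliardo--Nirenberg bound yields a superlinear differential inequality for $E(t)=\|\nabla_x\partial_x u\|_{L^2}^2$ whose local existence time depends on $E(0)$, not only on the conserved $m(0)$; hence the argument does not iterate to a global bound. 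Rodnianski--Rubinstein--Staffilani close this step with Bourgain's periodic $L^4$ (equivalently $X^{s,b}$) estimates, not with a Hamiltonian; the present paper closes it via the div--curl lemma, which renders the error effectively linear in $E$ on time intervals whose length depends only on $m(0)$. You need one of these two ingredients; the holonomy issue you flag is real but secondary to this.
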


In the Rodnianski-Rubinstein-Staffilani work \cite{Rodnianski 2009}, they need to translate the Schr\"odinger map flow into  nonlinear Schr\"odinger-type  equations or (systems). For the  maps from real line into K\"ahler manifolds, they can find a set of global parallel frams to translate the Schr\"odinger map flow into  nonlinear Schr\"odinger-type  equations or (systems) and use the Strichartz estimates to obatin the $L^4$-norm priori estimates to establish the global well-posedness. For the maps from circle into Riemann surfaces, they use the holonomy method and some transformation to translate the Schr\"odinger map flow into cubic nonlinear Schr\"odinger and adapt the Bourgain's results \cite{Bourgain 1993} to get a priori estimates for $L^4$-norm to establish the global well-posedness. 

For the maps from circle into  K\"ahler manifolds, it is unable to translate the Schr\"odinger map flow into  nonlinear Schr\"odinger-type  equations or (systems) so that the classical methods for studying the dispersive equation (such as Strichartz estimates) are no longer suitable for the research of Schr\"odinger map flow.  So in this work, we study the Schr\"odinger map flow from a new perspective. We don't need to translate the Schr\"odinger map flow into  nonlinear Schr\"odinger-type  equations or (systems), and  don't need to use the Strichartz estimates. Just like our  previous work \cite{wang 2022} for time-like extremal hypersurface equations, we also derive a new div-curl type lemma \ref{dc}  that connects energy and ``momentum" in the periodic case.  In our method, we can use the new div-curl type lemma \ref{dc} to study some balance law (i,e. $\partial_t f_1 +  \partial_x f_2 = f^{re}$).  Particularly, we can use the new div-curl lemma \ref{dc} to fully explore the energy balance law and ``momentum" balance law. Interestingly, in our approch, the ``momentum" balance law is as important as the energy balance law, this gives a new perspective in the study of the estimates of the nonlinearity.  Based  on this and making full use of the almost complex structure $J$ in K\"ahler manidolds, we can prove that the Schr\"odinger map flow for maps from the  $\mathbb{S}^1$ into  K\"ahler manifolds is globally well-posed. So far, the Wei-Yue Ding's conjecture has been completely proved.

Now, we describe our main results as follow:

\begin{theorem}
	Suppose $\left(M, g\right)= \left(\mathbb{S}^1, dx \otimes dx \right)$ and $\left(N, J, h\right)$ is a complete K\"ahler manifold  with bounded geometry. If the initial data $u_0\in H^k\left(\mathbb{S}^1, N\right)$ with $k\geq 2$, then the system \eqref{schf} admits a unique  flow $u\in C^0 \left(\left[0, \infty\right), H^k\left(\mathbb{S}^1, N\right) \right)$.  Particularly, when the initial data $u_0\in H^k\left(\mathbb{S}^1, N\right)$ with any $k\geq2$, then the flow $u\in C^{\infty}\left(\left[0,\infty \right)\times \mathbb{S}^1, N\right)$. 
\end{theorem}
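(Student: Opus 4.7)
The plan is to combine the local well-posedness theorem with a priori higher-order bounds that persist on every compact time interval, so that the standard continuation argument yields global regularity. Since $M=\mathbb{S}^1$ is compact and one-dimensional, the Strichartz-based approach of Rodnianski-Rubinstein-Staffilani is unavailable; the space-time smoothing it usually provides will instead be extracted from the new div-curl Lemma \ref{dc}.

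First I would rewrite the flow in terms of the tangent field $v=u_x\in\Gamma(u^*TN)$. Using the K\"ahler condition $\nabla J=0$ and the symmetry of the pullback connection in $(t,x)$ on the map itself, one obtains the clean identity $\nabla_t v=-J\nabla_x^2 v$. Pairing with $v$ and integrating gives conservation of $\mathcal{E}(u)=\tfrac12\int_{\mathbb{S}^1}|v|^2\,dx$, hence a uniform $L^\infty_t L^2_x$ bound on $v$. Next I would introduce a natural momentum density, for instance $p=\langle Jv,\nabla_x v\rangle$, and derive its pointwise balance law: a short computation using $\nabla J=0$ and the definition of the Riemann tensor expresses $\partial_t p$ as the spatial divergence of an energy-type quantity plus a remainder that is quadratic in $v$ and linear in the curvature of $N$. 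Together with the energy balance law, this provides exactly the pair $(\partial_t f+\partial_x g=\text{rem})$ that Lemma \ref{dc} is designed to exploit.

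Applying Lemma \ref{dc} to the energy-momentum pair should yield, on any $[0,T]\times\mathbb{S}^1$, a space-time estimate of the schematic form $\|v\|_{L^4_{t,x}}\le C(T,\mathcal{E}(u_0))$ together with a bilinear bound on products such as $v\cdot\nabla_x v$. This is the genuine substitute on the circle for the Strichartz $L^4$ bound used in the earlier partial results; crucially, it depends only on the periodic balance-law structure, not on dispersive decay, so it survives on the compact torus where the standard harmonic-analytic tools degenerate.

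With such space-time control in place, the $H^2$-level a priori bound proceeds by computing $\tfrac{d}{dt}\tfrac12\int|\nabla_x v|^2\,dx$. The leading term vanishes after integration by parts thanks to the anti-symmetry of $J$, leaving cubic and quartic terms built from $v$, $\nabla_x v$ and the curvature tensor of $N$; by the bounded-geometry hypothesis on $N$ these are exactly of the form that the $L^4_{t,x}$ and bilinear estimates absorb, after which Gr\"onwall's inequality closes the bound on any finite interval. Iterating the same pattern yields $H^k$-bounds for every $k\ge2$, and combined with the local theorem this produces the global smooth flow claimed. I expect the main obstacle to lie precisely in aligning the momentum balance law with the hypotheses of Lemma \ref{dc} so that the resulting space-time estimate is strong enough to defeat the cubic curvature-driven growth in the higher energy estimate; without the K\"ahler identity $\nabla J=0$ the cancellation that makes the principal part of this estimate vanish collapses, and with it the entire scheme.
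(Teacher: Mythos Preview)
Your overall route---energy and momentum balance laws, Lemma~\ref{dc}, an $H^2$ energy identity, and then continuation---is exactly the paper's, but the $H^2$ step as you describe it has a real gap. After the dispersive principal part cancels, the curvature contribution to $\tfrac{d}{dt}\tfrac12\int|\nabla_x v|^2$ is
\[
\int_{\mathbb{S}^1}\big\langle R(\partial_t u,\partial_x u)\partial_x u,\nabla_x\partial_x u\big\rangle
=\int_{\mathbb{S}^1}\big\langle R(-J\nabla_x v,v)v,\nabla_x v\big\rangle,
\]
which is of order $\int|v|^2|\nabla_x v|^2$. This term is \emph{not} killed by the anti-symmetry of $J$, and neither an $L^4_{t,x}$ bound on $v$ nor the bilinear quantity $\langle v,\nabla_x v\rangle$ controls it: plugging in $\|v\|_{L^\infty_x}^2\lesssim E^{1/2}m^{1/2}+m$ produces a right-hand side $\sim E^{3/2}$, and Gr\"onwall blows up. The paper's essential extra manipulation is to use the K\"ahler curvature symmetries (so that $\langle R(v,Jv)\nabla_x v,\nabla_x v\rangle=0$) together with the equation $\nabla_x^2 v=J\nabla_t v$ to rewrite this term as $-\tfrac18\,\partial_t\!\int\langle R(v,Jv)v,Jv\rangle$ plus remainders of the strictly milder type $\int|v|^4|\nabla_x v|$ coming from $\nabla R$; see the computation leading to \eqref{nlds1}. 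Only after this reduction does the div--curl output suffice.

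Two smaller corrections. First, what Lemma~\ref{dc} actually delivers, via the determinant identity \eqref{neiji1} and the pointwise inequality $|v|^2|\nabla_x v|^2-|\langle Jv,\nabla_x v\rangle|^2\ge 0$, is a bound on $\xi_1(t)=\int_0^t\!\int|\langle v,\nabla_x v\rangle|^2$, i.e.\ on $\|\partial_x|v|^2\|_{L^2_{t,x}}$; there is no $\|v\|_{L^4_{t,x}}$ estimate coming out of the lemma, and you should organise the nonlinear bounds (as in \eqref{si2}--\eqref{si3}, \eqref{dcs2}--\eqref{dcs4}) around $\xi_1$ instead. Second, the closing argument is not a Gr\"onwall: $\xi_1$ and $\sup_t E$ are genuinely coupled through \eqref{bts1} and \eqref{bts2}, and one runs a continuous-induction/bootstrap on an interval of length $T^1$ depending only on the conserved quantity $m(0)$, then iterates. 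With these two adjustments---and crucially the curvature rewriting above---your outline becomes the paper's proof.
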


\begin{remark}
	We also derive a  div-curl type lemma \ref{dcr} in the real line. Using this lemma to repeat our argument, the Wei-Yue Ding's conjecture for the real line case can be proved in an elementary way.
\end{remark}

For the higher dimensional case, there is also a lot of outstanding work. 
Chang-Shatah-Uhlenbeck \cite{Chang 2000} studied the raially symmetric Schr\"odinger map flow  maps from the $\mathbb{R}^2$ into  Riemann surface with small energy. Without raially symmetric assumption, Bejenaru-Ionescu-Kenig-Tataru \cite{Bejenaru 2011} obtained the global solutions for Schr\"odinger map flow  maps from the $\mathbb{R}^d$ for $d \geq 2$ into $\mathbb{S}^2$ with small energy in critical sobolev space. Besides, Bejenaru-Ionescu-Kenig-Tataru \cite{Bejenaru 2013} researched  equivariant solutions for the Schrödinger map problem from 
$\mathbb{R}^{1+2}$  to $\mathbb{S}^2$ with energy less than $4\pi$ and showed that they are global in time and scatter.  Huang-Wang-Zhao \cite{Huang 2020} considered the Schr\"odinger map flow  maps from two dimensional hyperbolic space $\mathbb{H}^2$ into sphere $\mathbb{S}^2$ with small initial data. For the Besov regularity,  the global existence for small data in critical Besov spaces was proved by  Ionescu-Kenig \cite{Ionescu 2007} and Bejenaru \cite{Bejenaru 2008} independently. Recently, Li extended
 the result of Bejenaru-Ionescu-Kenig-Tataru  \cite{Bejenaru 2011} to general K\"ahler manifolds, see \cite{Li 2018} and \cite{Li 2019}.

Next, we give the framework of this article. 
\subsection{Outline of the article.} This article is organized as follows:
In section \ref{NP},  we introduce some notations and  preliminaries, which makes the proof more easier to understand in the later section. In section \ref{bal}, we establish some balance law (i,e. $\partial_t f_1 + \partial_x f_2 = f^{re}$) associated with energy and ``momentum''. In section \ref{dcl}, we will derive a new div-curl lemma \ref{dc} in the periodic case, which palys a key role to get the regularity estimates of solution. In section \ref{55}, we note that ``div-curl'' structure is an ``inner product" in some sense, and calculate these ``inner product" induced by balance law. In section \ref{gwp}, we use the div-curl lemma \ref{dc} and continuous induction method to obatin  the priori estimates for the solution. Based on this and combining the local well-posedness results of Ding-Wang \cite{Ding 2001}, we can use  the standard interval decomposition method and extension method to obtain a global solution.

\section{Notations and Preliminaries}\label{NP}
In this section, we will make some notations and give some preliminaries.

We mark $ A \lesssim B $ to mean there exists a constant $ C > 0 $ such that $ A \leqslant C B $. We indicate dependence on parameters via subscripts, e.g. $ A \lesssim_{x} B $ indicates $ A \leqslant CB $ for some $ C = C(x) > 0 $.

To avoid ambiguity, let's make some symbolic conventions.

Suppose $\left(M,g\right)$ is a Riemann manifold with Riemann metric $g$. For any tangent vector fields $X$, $Y$ $\in TM$, we write $<X,Y>_g$ to represent the Riemann inner product for tangent vector fields $X$ and $Y$ and define $\vert X \rvert^2_g$ := $<X,X>_g$. For tangent vector fields $X$, $Y$, $Z$, $W$ $\in TM$, we define the curvature operators and curvature tensors:
\begin{align}
R(X,Y)Z:= \nabla_X \nabla_Y Z -\nabla_Y \nabla_X Z - \nabla_{\left[X,Y\right]}Z,\nonumber \\
R\left(X,Y,Z,W\right):=<R\left(Z,W\right)Y,X>_g \nonumber.
\end{align}

Under this notation convention,  the Schr\"odinger map flow maps from the real line into  K\"ahler manifolds and for maps from the circle into K\"ahler manifolds can be rewritten:
\begin{equation}\label{equ1}
J \partial_t u = \nabla_x \partial_x u.
\end{equation}.

Since we only consider that the target manifold with bounded geometry which means uniform bounds on the curvature tensor and its derivatives. So for simplicity of writing, we omit the dependence on curvature. That means we can omit the subscripts "$R$" in $ A \lesssim_{R} B $.

For convenience, let us define the following quantity:
\begin{equation*}
	m\left(t\right):=\int_{0}^{1}<\partial_x u, \partial_x u>_g 
\end{equation*}

\begin{equation*}
	E\left(t\right):=  \int_{0}^{1} <\nabla_x \partial_x u,\nabla_x \partial_x u >_g
\end{equation*}
   
\begin{equation*}
	\xi_1 \left(t\right) := \int_{0}^{t} \int_{0}^{1} det A_m
\end{equation*}

\begin{equation*}
	\xi_2 \left(t\right):= \int_{0}^{t} \int_{0}^{1} det A 
\end{equation*}   
where the $det A_m$, $det A$ are defined in section \ref{55}.

\section{law of equilibrium}\label{bal}

Let us take the inner product of both sides of \eqref{equ1} with the Riemann metric with the $\partial_t u$ and use the antisymmetry of the almost structure $J$,  we get:
\begin{equation*}
	<\nabla_x \partial_x u, \partial_t u>_g = -<\nabla_x \partial_x u, J \nabla_x \partial_x u >_g =0.
\end{equation*}

According to some simple calculations and the antisymmetry of the almost complex structure $J$, we rewrite above as follows: 
\begin{equation*}
	\partial_x <\partial_x u , \partial_t u>_g- <\partial_x u, \nabla_x \partial_t u>_g = 0,
\end{equation*}
and
\begin{equation*}
	\partial_x <\partial_x u , -J\nabla_x \partial_x u>_g- <\partial_x u, \nabla_x \partial_t u>_g = 0.
\end{equation*}

So we obtain the conservation law:
\begin{equation}\label{balance 1}
\frac{1}{2} \partial_t <\partial_x u, \partial_x u>_g -\partial_x <J\partial_x u, \nabla_x \partial_x u>_g = 0.
\end{equation}

Taking the covariant derivative of both sides of \eqref{equ1} with respect to the variable $x$, we have:
\begin{equation*}
J \nabla_x \partial_t u = \nabla_x \nabla_x \partial_x u.
\end{equation*}

According to the commutativity of covariant derivatives and differential operators, we can rewrite above as follow:
\begin{equation}\label{equ2}
J \nabla_t \partial_x u = \nabla_x \nabla_x \partial_x u.
\end{equation}

Taking the inner product of $J \partial_x u$ and $\nabla_x \partial_x u$ in the Riemann metric sense, and taking the derivative directly with respect to the variable $t$, we know: 

\begin{align}
	\partial_t <J\partial_x u, \nabla_x \partial_x u>_g&=<\nabla_x \partial_x u, J \nabla_t \partial_x u>_g + <\nabla_t\nabla_x \partial_x u , J\partial_x u>_g \nonumber\\
    &=<\nabla_x \partial_x u, J \nabla_t \partial_x u>_g+<\nabla_t\nabla_x \partial_x u, J\partial_x u>_g\nonumber\\
    &-<\nabla_x\nabla_t \partial_x u, J\partial_xu>_g+<\nabla_x\nabla_t\partial_x u, J\partial_x u>_g\nonumber\\
    &=<\nabla_x \partial_x u, J\nabla_t\partial_x u>_g+<\nabla_x\nabla_t\partial_x u, J\partial_x u>_g\nonumber\\
    &+<R\left(\partial_t u,\partial_x u\right)\partial_x u, J\partial_x u>_g\nonumber\\
    &=<\nabla_x \partial_x u, J\nabla_t\partial_x u>_g+\partial_x <\nabla_t\partial_x u, J\partial_x u>_g\nonumber\\
    &-<\nabla_t \partial_x u, J\nabla_x \partial_x u>_g+<R\left(\partial_t u,\partial_x u\right)\partial_x u, J\partial_x u>_g\nonumber\\
    &=2<\nabla_x \partial_x u, J\nabla_t\partial_x u>_g+\partial_x <\nabla_t\partial_x u, J\partial_x u>_g\nonumber\\
    &+<R\left(\partial_t u,\partial_x u\right)\partial_x u, J\partial_x u>_g\nonumber\\
    &=2<\nabla_x\partial_x u,  \nabla_x \nabla_x \partial_x u>_g-\partial_x <J\nabla_x \nabla_x \partial_x u, J\partial_x u>_g\nonumber\\
    &-<R\left(J\nabla_x\partial_x u, \partial_x u\right)\partial_x u, J\partial_x u >_g\nonumber\\
    &=2<\nabla_x\partial_x u,  \nabla_x \nabla_x \partial_x u>_g-\partial_x <\nabla_x \nabla_x \partial_x u, \partial_x u>_g\nonumber\\
    &+<R\left(\partial_x u, J \nabla_x \partial_x u\right)\partial_x u, J\partial_x u >_g\nonumber\\
    &=\partial_x<\nabla_x\partial_x u,  \nabla_x \partial_x u>_g-\partial_x <\nabla_x \nabla_x \partial_x u, \partial_x u>_g\nonumber\\
    &+\frac{1}{4}\partial_x <R\left(\partial_x u, J\partial_x u\right)\partial_x u, J\partial_x u >_g\nonumber\\
    &-\frac{1}{4} <\left(\partial_xR\right) \left(\partial_x u, J\partial_x u\right) \partial_x u, J\partial_x u>_g.
\end{align}

So we have the balance law:
\begin{align}\label{balance2}
&\partial_t <J\partial_x u, \nabla_x \partial_x u>_g -\partial_x\left(<\nabla_x \partial_x u,\nabla_x \partial_x u >_g-<\nabla_x\nabla_x \partial_x u, \partial_x u>_g\right)\nonumber\\
&-\frac{1}{4} \partial_x <R\left(\partial_x u, J\partial_x u\right) \partial_x u, J\partial_x u>_g= -\frac{1}{4} <\left(\partial_xR\right) \left(\partial_x u, J\partial_x u\right) \partial_x u, J\partial_x u>_g.
\end{align}

\section{New div-curl type lemma}\label{dcl}
In this section, we will prove a new div-curl lemma, which is useful for us to get the regularity estimates of solution.

\begin{lemma}\label{dc}
	Suppose that
	\begin{equation}
	\left\{
	\begin{aligned}
	&\partial_t f^{11} + \partial_x f^{12} =G^1\\
	& \partial_t f^{21}-\partial_x  f^{22}=G^2,
	\end{aligned}
	\right.
	\end{equation}
where $f^{i,j}$, $G^{i} $  are periodic in x with periodic 1.

	Then there hold
	\begin{align}
	&\int_{0}^{T}\int_{0}^{1} f^{11}f^{22}+f^{12}f^{21} 
	\nonumber\\
	&\lesssim
	\left(\|f^{11}\left(0\right)\|_{L^1} + \sup\limits_{0\leq t\leq T}  \|f^{11}\left(t\right)\|_{L^1} 
	 +\int_{0}^{T}\int_{0}^{1}\lvert G^1\rvert \right)\nonumber\\
	&\cdot\left(\|f^{21}\left(0\right)\|_{L^1} +\sup\limits_{0\leq t\leq T} \|f^{21}\left(t\right)\|_{L^1}  +\int_{0}^{T}\int_{0}^{1}\lvert G^2\rvert \right)\nonumber\\
	&+\int_{0}^{T} \left(\int_{0}^{1}f^{11}\int_{0}^{1}f^{22} + \int_{0}^{1}f^{12}\int_{0}^{1}f^{21}\right)
	\end{align}
provided that the right side is bounded.
\end{lemma}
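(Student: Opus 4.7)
The plan is to separate the functions into their $x$-means and zero-mean fluctuations, absorb the mean-mean interaction into the last term of the desired inequality, and handle the fluctuations by the standard div--curl/compensated-compactness trick of writing $\tilde f^{11}$ and $\tilde f^{21}$ as $x$-derivatives of scalar potentials.

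\emph{Step 1 (mean--fluctuation split).} For each $f^{ij}$ set $\bar f^{ij}(t)=\int_0^1 f^{ij}(t,x)\,dx$ and $\tilde f^{ij}=f^{ij}-\bar f^{ij}$, and analogously $\tilde G^i=G^i-\bar G^i$. Since $\int_0^1\tilde f^{ij}=0$, one has
\begin{equation*}
\int_0^1(f^{11}f^{22}+f^{12}f^{21})\,dx=\bar f^{11}\bar f^{22}+\bar f^{12}\bar f^{21}+\int_0^1(\tilde f^{11}\tilde f^{22}+\tilde f^{12}\tilde f^{21})\,dx,
\end{equation*}
so the mean term contributes exactly the final line of the RHS and the remaining task is to control the fluctuation integral, call it $\tilde I$.

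\emph{Step 2 (potentials).} Using the vanishing mean of $\tilde f^{11}$ and $\tilde f^{21}$, define zero-mean periodic potentials $\phi(t,x),\psi(t,x)$ by $\partial_x\phi=\tilde f^{11}$, $\partial_x\psi=\tilde f^{21}$. Averaging the two PDEs over $x$ yields $\partial_t\bar f^{11}=\bar G^1$ and $\partial_t\bar f^{21}=\bar G^2$, hence $\partial_t\tilde f^{11}+\partial_x\tilde f^{12}=\tilde G^1$ and $\partial_t\tilde f^{21}-\partial_x\tilde f^{22}=\tilde G^2$. Integrating these in $x$ gives
\begin{equation*}
\partial_t\phi+\tilde f^{12}=H^1,\qquad \partial_t\psi-\tilde f^{22}=H^2,
\end{equation*}
where $H^1,H^2$ are the zero-mean antiderivatives in $x$ of $\tilde G^1,\tilde G^2$.

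\emph{Step 3 (div--curl identity and integration by parts).} Substitute the above expressions for $\tilde f^{12},\tilde f^{22}$ into the integrand to obtain
\begin{equation*}
\tilde f^{11}\tilde f^{22}+\tilde f^{12}\tilde f^{21}=\partial_x\phi\,\partial_t\psi-\partial_t\phi\,\partial_x\psi-H^2\partial_x\phi+H^1\partial_x\psi.
\end{equation*}
The Jacobian recombines as the total divergence $\partial_x(\phi\,\partial_t\psi)-\partial_t(\phi\,\partial_x\psi)$, which is exactly the compensated-compactness cancellation. Integrating over $[0,T]\times[0,1]$, using $x$-periodicity, and integrating by parts in $x$ in the remaining two terms (using $\partial_x H^i=\tilde G^i$) gives
\begin{align*}
\tilde I&=\int_0^1\phi(0,x)\tilde f^{21}(0,x)\,dx-\int_0^1\phi(T,x)\tilde f^{21}(T,x)\,dx\\
&\quad+\int_0^T\!\!\int_0^1\phi\,\tilde G^2\,dx\,dt-\int_0^T\!\!\int_0^1\psi\,\tilde G^1\,dx\,dt.
\end{align*}

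\emph{Step 4 (final estimates).} Because $\phi$ has zero mean and $\partial_x\phi=\tilde f^{11}$, at every time $\|\phi(t,\cdot)\|_{L^\infty}\lesssim \|\tilde f^{11}(t,\cdot)\|_{L^1}\lesssim \|f^{11}(t,\cdot)\|_{L^1}$, and similarly for $\psi$. Bounding each of the four terms above by H\"older and dominating $\|\tilde G^i\|_{L^1_{t,x}}$ by $\|G^i\|_{L^1_{t,x}}$ produces a sum of products of the form $A\cdot B$ with $A\in\{\|f^{11}(0)\|_{L^1},\sup_t\|f^{11}\|_{L^1},\int\!\!\int|G^1|\}$ and $B\in\{\|f^{21}(0)\|_{L^1},\sup_t\|f^{21}\|_{L^1},\int\!\!\int|G^2|\}$; each such product is majorised by the full product appearing in the statement, completing the estimate. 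I expect the only delicate point to be bookkeeping of signs and the boundary contribution at $t=0,T$ when converting the Jacobian into a genuine divergence; once those are correctly organised, the inequality follows directly without further analytic input.
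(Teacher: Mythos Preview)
Your proof is correct and follows essentially the same approach as the paper: reduce to the zero-mean case, introduce a periodic antiderivative of the zero-mean part, and integrate by parts so that the cross term collapses to boundary contributions at $t=0,T$ plus error terms involving $G^1,G^2$. The only cosmetic difference is that you subtract means from all four $f^{ij}$ and introduce potentials $\phi,\psi$ for both equations (making the Jacobian structure explicit), whereas the paper subtracts the mean only from $f^{11}$ and uses the single potential $\int_0^1 dy\int_y^x f^{11}$; the resulting identities and estimates are the same.
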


\begin{proof}
We first assume that
\begin{equation}\label{as}
	\int_{0}^{1} f^{11} = 0.
\end{equation}

Noting that:	
	\begin{equation*}
	\partial_t \left(\int_{y}^{x} f^{11}  \right) +f^{12}\left(x\right)-f^{12}\left(y\right)=\int_{y}^{x} G^1.
	\end{equation*}

So we have:
\begin{equation*}
\partial_t \left(\int_{0}^{1} dy \int_{y}^{x} f^{11} \right)  + f^{12}\left(x\right) = \int_{0}^{1} f^{12} dy + \int_{0}^{1} dy\int_{y}^{x} G^1.
\end{equation*}

	Then 
	\begin{equation}\label{dd1}
	f^{21}\left(x\right)\partial_t \left(\int_{0}^{1} dy \int_{y}^{x}f^{11}\right)+f^{12}\left(x\right)f^{21}\left(x\right)= f^{21}\left(x\right) \int_{0}^{1} f^{12} dy + f^{21}\left(x\right) \int_{0}^{1} dy \int_{y}^{x} G^1
	\end{equation}
	
	\begin{equation}\label{dd2}
	\partial_t f^{21}\left( \int_{0}^{1}dy \int_{y}^{x}f^{11} \right)-\partial_xf^{22}\int_{0}^{1}dy \int_{y}^{x}f^{11}= G^2\int_{0}^{1}dy \int_{y}^{x}f^{11}
	\end{equation}
	
	\eqref{dd1}+\eqref{dd2}:

	\begin{align}
	 &\partial_t \left(\int_{0}^{1} f^{21} \int_{0}^{1}dy  \int_{y}^{x} f^{11}\right) + \int_{0}^{1} 
	\left(f^{12}f^{21}-\partial_x f^{22}\int_{0}^{1}dy \int_{y}^{x} f^{11}\right)\nonumber\\
	&=\int_{0}^{1}f^{21}\int_{0}^{1} dy\int_{y}^{x}G^1 + \int_{0}^{1} G^2 \int_{0}^{1} dy \int_{y}^{x} f^{11}  +\int_{0}^{1}f^{21}\int_{0}^{1} f^{12} dy
	\end{align}

Using \eqref{as}, and by periodic in $x$ with periodic 1, we know:
\begin{equation*}
	f^{22}\left(t,1\right)\int_{0}^{1} dy\int_{y}^{1} f^{11} - f^{22}\left(t,0\right)\int_{0}^{1} dy\int_{y}^{0} f^{11}= f^{22}\left(t,0\right)\int_{0}^{1} dy\int_{0}^{1} f^{11}=0
\end{equation*}	
	So we have:
   \begin{align}
	\int_{0}^{T}\int_{0}^{\infty} f^{11}f^{22}+f^{12}f^{21}=& \left(\int_{0}^{1} f^{21} \int_{0}^{1}dy  \int_{y}^{x} f^{21}\right) \bigg|^{T}_{0}\nonumber \\
	&+\int_{0}^{T} \int_{0}^{1}f^{21}\int_{0}^{1} dy\int_{y}^{x}G^1\nonumber\\
	& + \int_{0}^{T}\int_{0}^{1} G^2 \int_{0}^{1} dy \int_{y}^{x} f^{11}\nonumber \\
	&+ \int_{0}^{T}\int_{0}^{1}f^{21}\int_{0}^{1} f^{12}\nonumber\\
	&:= \mathcal{A}_1+\mathcal{A}_2+\mathcal{A}_3 +\mathcal{A}_4,
	\end{align}
	
where	
	
	\begin{align*}
	\lvert \mathcal{A}_1\rvert&\lesssim \|f^{11}\left(0\right)\|_{L^1}\|f^{21}\left(0\right)\|_{L^1} + \|f^{11}\left(T\right)\|_{L^1}\|f^{21}\left(T\right)\|_{L^1},
	\end{align*}
	\begin{align*}
	\lvert \mathcal{A}_2\rvert&\lesssim\int_{0}^{T} \|f^{21}\left(t\right)\|_{L^1}\|G^1\left(t\right)\|_{L^1}\\
	\lesssim&\sup\limits_{0\leq t\leq T} \| f^{21}\left(t\right)\|_{L^1}\left(\int_{0}^{T}\int_{0}^{1}\lvert G^1\rvert\right),
	\end{align*}
	\begin{align*}
	\lvert \mathcal{A}_3\rvert&\lesssim\int_{0}^{T} \|f^{11}\left(t\right)\|_{L^1}\|G^2\left(t\right)\|_{L^1}\\
	\lesssim&\sup\limits_{0\leq t\leq T} \| f^{11}\left(t\right)\|_{L^1}\left(\int_{0}^{T}\int_{0}^{1}\lvert G^2\rvert\right).
	\end{align*}

If \eqref{as} fails, we define:
    \begin{align}
	&\widetilde{f}^{11}:= f^{11}-\int_{0}^{1}f^{11}, \nonumber\\
	&\widetilde{G}^1:=G^{1}-\int_{0}^{1}G^{1}.
    \end{align}

We know:
\begin{equation}
	\partial_t \widetilde{f}^{11} + \partial_x f^{12} =\widetilde{G}^1
\end{equation}	
	Repeating the process above, we complete the proof. 
\end{proof}

\begin{lemma}\label{dcr}
	Suppose that
	\begin{equation}
	\left\{
	\begin{aligned}
	&\partial_t f^{11} + \partial_x  f^{12} =G^1,\\
	& \partial_t f^{21}-\partial_x f^{22}=G^2,
	\end{aligned}
	\right.
	\end{equation}
	
	\begin{equation}
	\begin{aligned}
	f^ {11}, f^{12}, f^{21}, f^{22} \rightarrow 0, x\rightarrow \infty.
	\end{aligned}
	\end{equation}
	Then there hold
	
	\begin{align}
	\int_{0}^{T}\int_{-\infty}^{+\infty} f^{11}f^{22}+f^{12}f^{21} &\lesssim
	\left(\|f^{11}\left(0\right)\|_{L^1} + \sup\limits_{0\leq t\leq T}  \|f^{11}\left(t\right)\|_{L^1} + \int_{0}^{T}\int_{-\infty}^{+\infty}\lvert G^1\rvert \right)\nonumber\\
	&\cdot\left(\|f^{21}\left(0\right)\|_{L^1} +\sup\limits_{0\leq t\leq T} \|f^{21}\left(t\right)\|_{L^1}  +\int_{0}^{T}\int_{-\infty}^{+\infty}\lvert G^2\rvert \right)\nonumber
	\end{align}
	provided that the right side is bounded.
\end{lemma}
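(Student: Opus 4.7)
\textbf{Proof proposal for Lemma \ref{dcr}.} The plan is to mimic the argument of Lemma \ref{dc}, replacing the averaging device $\int_0^1 dy\int_y^x$ (which in the periodic case was needed to build a well-defined antiderivative with zero mean) by a genuine antiderivative that is available on the whole line thanks to the decay hypothesis $f^{ij}\to 0$ as $x\to\infty$. Decay at infinity will also make the cross-mean term $\int_0^1 f^{21}\int_0^1 f^{12}$ of Lemma \ref{dc} disappear, so the bound is slightly cleaner.

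First, I would introduce the antiderivative
\begin{equation*}
F^{11}(t,x):=\int_{-\infty}^{x} f^{11}(t,y)\,dy,
\end{equation*}
which is uniformly bounded by $\|f^{11}(t)\|_{L^1}$. Integrating the first equation $\partial_t f^{11}+\partial_x f^{12}=G^1$ from $-\infty$ to $x$ and using $f^{12}\to 0$ as $x\to -\infty$ gives
\begin{equation*}
\partial_t F^{11}(t,x)+f^{12}(t,x)=\int_{-\infty}^{x} G^1(t,y)\,dy.
\end{equation*}
Multiply this by $f^{21}$, multiply the second equation $\partial_t f^{21}-\partial_x f^{22}=G^2$ by $F^{11}$, and add. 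The two time-derivatives combine into $\partial_t(F^{11}f^{21})$, and after integrating in $x\in\mathbb{R}$ the term $\int F^{11}\,\partial_x f^{22}\,dx=-\int f^{11}f^{22}\,dx$ via integration by parts, where the boundary contributions vanish because $F^{11}$ is bounded and $f^{22}\to 0$ at $\pm\infty$.

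Integrating the resulting identity in $t\in[0,T]$ yields
\begin{align*}
\int_0^T\!\!\!\int_{-\infty}^{+\infty} \bigl(f^{11}f^{22}+f^{12}f^{21}\bigr)
&= -\Bigl[\int_{-\infty}^{+\infty} F^{11}f^{21}\Bigr]_{0}^{T}
+\int_0^T\!\!\!\int_{-\infty}^{+\infty} f^{21}\!\int_{-\infty}^{x}\!\!G^1\,dy\,dx\\
&\qquad+\int_0^T\!\!\!\int_{-\infty}^{+\infty} F^{11}\,G^2\,dx.
\end{align*}
Each of the three terms on the right is controlled in the desired product form by invoking $\|F^{11}(t)\|_{L^\infty}\le \|f^{11}(t)\|_{L^1}$ and $\bigl\|\int_{-\infty}^{x}G^1\,dy\bigr\|_{L^\infty_x}\le \|G^1(t)\|_{L^1}$, together with H\"older's inequality in $x$ and then in $t$, exactly as in the estimates for $\mathcal{A}_1,\mathcal{A}_2,\mathcal{A}_3$ in the proof of Lemma \ref{dc}.

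The main obstacle is not analytical depth but bookkeeping: one must check that all integration-by-parts boundary terms really vanish. This reduces to observing that although $F^{11}$ need not decay at $+\infty$ (its value there is the total mass $\int f^{11}$, possibly nonzero), the factor it multiplies, $f^{22}$, does decay, so the product does vanish at $\pm\infty$; the same remark applies in the $t$-endpoint term, which is controlled by $\|f^{11}(t)\|_{L^1}\|f^{21}(t)\|_{L^1}$ at $t=0$ and $t=T$. Once this is verified, no nontrivial cross-mean correction is needed, which is why the periodic-case term $\int f^{21}\int f^{12}$ is absent from the real-line estimate.
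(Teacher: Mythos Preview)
Your proposal is correct and follows essentially the same approach as the paper: introduce the antiderivative $F^{11}(t,x)=\int_{-\infty}^x f^{11}$, derive $\partial_t F^{11}+f^{12}=\int_{-\infty}^x G^1$, multiply by $f^{21}$, multiply the second equation by $F^{11}$, add, integrate in $x$ and $t$, and estimate the three resulting pieces $\mathcal{A}_1,\mathcal{A}_2,\mathcal{A}_3$ exactly as stated. Your additional remarks on why the boundary terms vanish and why the periodic cross-mean term is absent are accurate elaborations of points the paper leaves implicit.
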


\begin{proof}
   \begin{equation*}
	\partial_t \left(\int_{-\infty}^{x} f^{11} \right) +f^{12}=\int_{-\infty}^{x} G^1.
	\end{equation*}
	
	Then 
	\begin{equation}\label{dd1r}
	f^{21}\int_{-\infty}^{x} \partial_t f^{11}+f^{12}f^{21}=\int_{-\infty}^{x}G^1f^{21},
	\end{equation}
	
	\begin{equation}\label{dd2r}
	\partial_t f^{21}\int_{-\infty}^{x} f^{11}-\partial_x f^{22}\int_{-\infty}^{x}f^{11}= G^2\int_{-\infty}^{x}f^{11}.
	\end{equation}
	
	\eqref{dd1r}+\eqref{dd2r}:
	\begin{align}
	\int_{-\infty}^{+\infty} \partial_t \left(\int_{-\infty}^{x} f^{11}f^{21}\right)+&\int_{-\infty}^{+\infty}
	\left(f^{12}f^{21}-\partial_xf^{22}\int_{-\infty}^{x}f^{11}\right)\nonumber\\ 
	&=\int_{-\infty}^{+\infty}\left(f^{21}\int_{-\infty}^{x}G^1 + G^2\int_{-\infty}^{x}f^{11}\right)\nonumber.
	\end{align}
	
	We have:
	
	\begin{align}
	\int_{0}^{T}\int_{-\infty}^{+\infty} f^{11}f^{22}+f^{12}f^{21}=& \int_{-\infty}^{+\infty} \left(\int_{-\infty}^{x} f^{11}f^{21}\right)\left(0\right)- \left(\int_{-\infty}^{x} f^{11}f^{21}\right)\left(T\right) \nonumber\\
	&+\int_{0}^{T}\int_{-\infty}^{+\infty}\left(f^{21}\int_{-\infty}^{x}G^1 + G^2\int_{-\infty}^{x}f^{11}\right)\nonumber\\
	&:= \mathcal{A}_1+\mathcal{A}_2+\mathcal{A}_3,
	\end{align}
	
	where	
	
	\begin{align}
	\lvert \mathcal{A}_1\rvert&\lesssim \|f^{11}\left(0\right)\|_{L^1}\|f^{21}\left(0\right)\|_{L^1} + \|f^{11}\left(T\right)\|_{L^1}\|f^{21}\left(T\right)\|_{L^1},\nonumber
	\end{align}

	\begin{align}
	\lvert \mathcal{A}_2\rvert&\lesssim\int_{0}^{T} \|f^{21}\left(t\right)\|_{L^1}\|G^1\left(t\right)\|_{L^1}\nonumber\\
	\lesssim&\sup\limits_{0\leq t\leq T} \| f^{21}\left(t\right)\|_{L^1}\left(\int_{0}^{T}\int_{-\infty}^{+\infty}\lvert G^1\rvert\right),\nonumber
	\end{align}

	\begin{align}
	\lvert \mathcal{A}_3\rvert&\lesssim\int_{0}^{T} \|f^{11}\left(t\right)\|_{L^1}\|G^2\left(t\right)\|_{L^1}\nonumber\\
	\lesssim&\sup\limits_{0\leq t\leq T} \| f^{11}\left(t\right)\|_{L^1}\left(\int_{0}^{T}\int_{-\infty}^{+\infty}\lvert G^2\rvert\right)\nonumber.
	\end{align}

	Based analysis above, we complete the proof. 
\end{proof}

\begin{remark}\label{neiji}
	In fact, the quantity $f^{11}f^{22}+f^{12}f^{21}$ is an ``inner product" in some sense. But by coincidence, this quantity equals to the determinant of a matrix.  We  can represent the matrix in the following form
	
	\begin{equation*}
	\mathbb{A}=
	\begin{pmatrix}
	f^{11} & -f^{12} \\
	f^{21}           & f^{22}
	\end{pmatrix}.
	\end{equation*}
	
\end{remark}

\section{Some calculus from law of equilibrium}\label{55}
In this section, we will calculate some ``inner product''   induced by balance law, which paly a crucial role in the  estimates of regularity in the later sections. 

From the above remark \ref{neiji}, we can calculate some determinant of matrix to obtain some ``inner product'' .

\begin{equation}
A:=
\begin{pmatrix}
a &b \\
c          & d 
\end{pmatrix},
\end{equation}

\begin{equation*}
a:=\frac{1}{2}  <\partial_x u, \partial_x u>_g,
\end{equation*}

\begin{equation*}
b:=<J\partial_x u, \nabla_x \partial_x u>_g,
\end{equation*}

\begin{equation*}
c:=<J\partial_x u, \nabla_x \partial_x u>_g,
\end{equation*}

\begin{equation*}
d:=<\nabla_x \partial_x u,\nabla_x \partial_x u >_g-<\nabla_x\nabla_x \partial_x u, \partial_x u>_g+\frac{1}{4}<R\left(\partial_x u, J\partial_x u\right) \partial_x u, J\partial_x u>_g.
\end{equation*}

\begin{align}\label{neiji1}
det A &= ad-bc\nonumber\\
	  &=\frac{1}{2}<\partial_x u, \partial_x u>_g<\nabla_x \partial_x u,\nabla_x \partial_x u >_g-\frac{1}{2}<\partial_x u, \partial_x u><\nabla_x\nabla_x \partial_x u, \partial_x u>_g\nonumber\\
	  &+\frac{1}{8}<\partial_x u, \partial_x u>_g<R\left(\partial_x u, J\partial_x u\right) \partial_x u, J\partial_x u>_g-\lvert<J\partial_x u, \nabla_x \partial_x u>_g\rvert^2\nonumber\\
	  &= det A_m + <\partial_x u, \partial_x u>_g<\nabla_x \partial_x u,\nabla_x \partial_x u >_g-\lvert<J\partial_x u, \nabla_x \partial_x u>_g\rvert^2\nonumber\\
	  &+\frac{1}{8}<\partial_x u, \partial_x u>_g<R\left(\partial_x u, J\partial_x u\right) \partial_x u, J\partial_x u>_g\nonumber \\
	  &-\frac{1}{2} \partial_x \left(<\partial_x u, \partial_x u>_g<\nabla_x \partial_x u, \partial_x u >_g\right),
\end{align}

where \begin{equation*}
	det A_m := \lvert <\nabla_x \partial_x u, \partial_x u >_g \rvert^2.
\end{equation*}

\section{Global well-posedness for the Schr\"odinger map flow}\label{gwp}
From \eqref{balance 1}, it is easy to see $m\left(t\right)=m\left(0\right)$.

First, taking the inner product of both sides of \eqref{equ2} with the Riemann metric with the $\nabla_t \partial_x u$, we obatin:
\begin{align}\label{nlgj}
	0= <\nabla_t \partial_x u, J \nabla_t \partial_x u>_g=<\nabla_t \partial_x u, \nabla_x \nabla_x \partial_x u>_g.
\end{align}

Integrating both sides of the equation \eqref{nlgj}, and using integration by parts, according to some simple calculations, we know:
\begin{align*}
	0&=\int_{0}^{1}<\nabla_t \partial_x u, \nabla_x \nabla_x \partial_x u>_g\nonumber\\
	&=-\int_{0}^{1}<\nabla_x \nabla_t \partial_x u,  \nabla_x \partial_x u>_g+\int_{0}^{1} <\nabla_t \nabla_x \partial_x u,  \nabla_x \partial_x u>_g \nonumber\\
	&-\int_{0}^{1} <\nabla_t \nabla_x \partial_x u,  \nabla_x \partial_x u>_g\nonumber\\
	&=-\frac{1}{2}\partial_t \int_{0}^{1}<\nabla_x \partial_x u,\nabla_x \partial_x u>_g + \int_{0}^{1} <R\left(\partial_t u,\partial_x u\right)\partial_x u, \nabla_x\partial_x u>_g.
\end{align*}

Combining equation \eqref{equ1} and the antisymmetry of the curvature, we integrate by parts to get:
\begin{align}	
	&0=-\frac{1}{2}\partial_t \int_{0}^{1}<\nabla_x \partial_x u,\nabla_x \partial_x u>_g+\int_{0}^{1}<R\left(\partial_x u,  J\nabla_x \partial_x u\right)\partial_x u, \nabla_x \partial_x u >_g\nonumber\\
	&=-\frac{1}{2}\partial_t \int_{0}^{1}<\nabla_x \partial_x u,\nabla_x \partial_x u>_g+\frac{1}{2}\int_{0}^{1} <\nabla_x\left(R\left(\partial_x u, J \partial_x u\right)\partial_x u\right), \nabla_x\partial_x u >_g\nonumber\\
	&-\frac{1}{2}\int_{0}^{1} <\left(\partial_xR\right)\left(\partial_x u,  J \partial_x u\right)\partial_x u, \nabla_x \partial_x u >_g\nonumber\\
	&-\frac{1}{2}\int_{0}^{1}<R\left(\partial_x u,   J\partial_x u\right)\nabla_x\partial_x u, \nabla_x \partial_x u >_g\nonumber\\
	&=-\frac{1}{2}\partial_t \int_{0}^{1}<\nabla_x \partial_x u,\nabla_x \partial_x u>_g+\frac{1}{2}\int_{0}^{1} <\nabla_x\left(R\left(\partial_x u, J \partial_x u\right)\partial_x u\right), \nabla_x\partial_x u >_g\nonumber\\
	&-\frac{1}{2}\int_{0}^{1} <\left(\partial_xR\right)\left(\partial_x u,  J \partial_x u\right)\partial_x u, \nabla_x \partial_x u >_g
	\end{align}

According to some simple calculations, we have:
\begin{align}
	\frac{1}{2}\partial_t \int_{0}^{1}<\nabla_x \partial_x u,\nabla_x \partial_x u>_g&=-\frac{1}{2}\int_{0}^{1}<R\left(\partial_x u,  J\partial_x u\right)\partial_x u,  \nabla_x\nabla_x \partial_x u >_g\nonumber\\
	&-\frac{1}{2}\int_{0}^{1} <\left(\partial_xR\right)\left(\partial_x u,  J\partial_x u\right)\partial_x u, \nabla_x \partial_x u >_g\nonumber\\
	&=-\frac{1}{2}\int_{0}^{1}<R\left(\partial_x u,  J\partial_x u\right)\partial_x u,  J\nabla_t \partial_x u >_g\nonumber\\
	&-\frac{1}{2}\int_{0}^{1} <\left(\partial_xR\right)\left(\partial_x u,  J \partial_x u\right)\partial_x u, \nabla_x \partial_x u >_g\nonumber\\
	&=-\frac{1}{8}\partial_t \int_{0}^{1}<R\left(\partial_x u, J \partial_x u\right)\partial_x u,  J\partial_x u >_g\nonumber\\
	&+\frac{1}{8}\int_{0}^{1}<\left(\partial_t R\right)\left(\partial_x u,  J \partial_x u\right)\partial_x u, J \partial_x u >_g\nonumber\\
	&-\frac{1}{2}\int_{0}^{1} <\left(\partial_xR\right)\left(\partial_x u,  J \partial_x u\right)\partial_x u, \nabla_x \partial_x u >_g
\end{align}

We  integrate the above equation from $0$ to $t$ to obatin:
\begin{align} \label{nlds1}
	&\frac{1}{2}  E\left(t\right) + \frac{1}{8}  \int_{0}^{1} <R\left(\partial_x u, J\partial_x u\right) \partial_x u, J\partial_x u>_g\nonumber\\
	&=\frac{1}{2} E\left(0\right) + \frac{1}{8}  \int_{0}^{1} <R\left(\partial_x u, J\partial_x u\right) \partial_x u, J\partial_x u>_g \big|_{t=0}\nonumber\\
	&+\frac{1}{8}\int_{0}^{t}\int_{0}^{1} <\left(\partial_t R\right) \left(\partial_x u, J\partial_x u\right) \partial_x u, J\partial_x u>_g\nonumber\\
	&-\frac{1}{2}\int_{0}^{t}\int_{0}^{1} <\left(\partial_x R\right) \left(\partial_x u, J\partial_x u\right) \partial_x u, \nabla_x\partial_x u>_g.
\end{align}

By sobolev embedding and interpolation, we know:
\begin{align}\label{si1}
	&\int_{0}^{1} <R\left(\partial_x u, J\partial_x u\right) \partial_x u, J\partial_x u>_g\nonumber \\
	&\lesssim \int_{0}^{1} \lvert  \partial_x u\rvert^4_g\nonumber \\
	&\lesssim \|\nabla_x \partial_x u\|_{L_g^2} \|\partial_x u\|^3_{L_g^2} + \|\partial_x u\|^4_{L_g^2}\nonumber\\
	&\lesssim E^{1/2}\left(t\right) m^{3/2}\left(0\right) + m^2\left(0\right), 
\end{align}

\begin{align}\label{si2}
&\int_{0}^{t}\int_{0}^{1} <\left(\partial_t R\right) \left(\partial_x u, J\partial_x u\right) \partial_x u, J\partial_x u>_g\nonumber\\
& \lesssim \int_{0}^{t}\int_{0}^{1} \lvert \nabla_x \partial_x u\rvert_g \lvert  \partial_x u\rvert^4_g\nonumber\\
&\lesssim \int_{0}^{t} \|\lvert\nabla_x \partial_x u\rvert_g\|_{L^2} \| \lvert  \partial_x u\rvert^4_g \|_{L^2}\nonumber\\
&\lesssim \int_{0}^{t}  \|\lvert\nabla_x \partial_x u\rvert_g\|_{L^2}  \| \lvert  \partial_x u\rvert^2_g \|^2_{L^4}\nonumber\\
&\lesssim \int_{0}^{t}  \|\lvert\nabla_x \partial_x u\rvert_g\|_{L^2}\|\partial_x \lvert \partial_x u\rvert^2_g\|_{L^2}\| \lvert \partial_x u\rvert^2_g\|_{L^1}+\|\lvert\nabla_x \partial_x u\rvert_g\|_{L^2}\| \lvert \partial_x u\rvert^2_g\|^2_{L^1}\nonumber\\
&\lesssim 2 \sup\limits_{t} E^{1/2}\left(t\right)m\left(0\right)\xi^{1/2}_1\left(t\right)t^{1/2} + \sup\limits_{t} E^{1/2}\left(t\right)m^2\left(0\right)t,
\end{align}

\begin{align}\label{si3}
&\int_{0}^{t}\int_{0}^{1} <\left(\partial_x R\right) \left(\partial_x u, J\partial_x u\right) \partial_x u, \nabla_x\partial_x u>_g\nonumber\\
& \lesssim \int_{0}^{t}\int_{0}^{1} \lvert \nabla_x \partial_x u\rvert_g \lvert  \partial_x u\rvert^4_g\nonumber\\
&\lesssim 2 \sup\limits_{t} E^{1/2}\left(t\right)m\left(0\right)\xi^{1/2}_1\left(t\right)t^{1/2}+ \sup\limits_{t} E^{1/2}\left(t\right)m^2\left(0\right)t.
\end{align}

Combining \eqref{nlds1}, \eqref{si1}, \eqref{si2},  and \eqref{si3}, we get:
\begin{align}\label{bts1}
	\frac{1}{4} E\left(t\right) &\lesssim \frac{1}{2} E\left(0\right) + m^3\left(0\right) + m^2\left(0\right)\nonumber\\
	 &+ \frac{3}{4}\left(2 \sup\limits_{t} E^{1/2}\left(t\right)m\left(0\right)\xi^{1/2}_1\left(t\right)t^{1/2}+ \sup\limits_{t} E^{1/2}\left(t\right)m^2\left(0\right)t\right)
\end{align}

According to the div-curl lemma \ref{dc}, we have:
\begin{align}\label{dcs}
\xi_2\left(t\right)&\lesssim  \int_{0}^{t} \int_{0}^{1}<\partial_x u, \partial_x u>_g\int_{0}^{1}<\nabla_x \partial_x u,\nabla_x \partial_x u >_g\nonumber\\
&-\int_{0}^{t} \int_{0}^{1}<\partial_x u, \partial_x u>_g\int_{0}^{1} <\nabla_x\nabla_x \partial_x u, \partial_x u>_g\nonumber\\
&+\frac{1}{4}\int_{0}^{t} \int_{0}^{1}<\partial_x u, \partial_x u>_g\int_{0}^{1}<R\left(\partial_x u, J\partial_x u\right) \partial_x u, J\partial_x u>_g\nonumber\\
&+\sup\limits_{t}\|<\partial_x u,\partial_x u>_g\|_{L^1}\sup\limits_{t}\|<\nabla_x \partial_x u,J \partial_x u>_g\|_{L^1}\nonumber\\
&+\frac{1}{4}\sup\limits_{t}\|<\partial_x u,\partial_x u>_g\|_{L^1}\int_{0}^{t}\int_{0}^{1} \lvert <\left(\partial_x R\right) \left(\partial_x u, J\partial_x u\right) \partial_x u, J\partial_x u>_g\rvert\nonumber\\
&\lesssim 2\int_{0}^{t} \int_{0}^{1}<\partial_x u, \partial_x u>_g\int_{0}^{1}<\nabla_x \partial_x u,\nabla_x \partial_x u >_g\nonumber\\
&+\frac{1}{4}\int_{0}^{t} \int_{0}^{1}<\partial_x u, \partial_x u>_g\int_{0}^{1}<R\left(\partial_x u, J\partial_x u\right) \partial_x u, J\partial_x u>_g\nonumber\\
&+m\left(0\right)\left(\sup\limits_{ t}E^{1/2}\left(t\right)m^{1/2}\left(0\right) +\frac{1}{4}\int_{0}^{t}\int_{0}^{1} \lvert <\left(\partial_x R\right) \left(\partial_x u, J\partial_x u\right) \partial_x u, J\partial_x u>_g\rvert \right)
\end{align}

By sobolev embedding and interpolation, we know:
\begin{align}\label{dcs1}
&2\int_{0}^{t} \int_{0}^{1}<\partial_x u, \partial_x u>_g\int_{0}^{1}<\nabla_x \partial_x u,\nabla_x \partial_x u >_g\nonumber\\
&\leq 2m\left(0\right)\sup\limits_{t} E\left(t\right)t
\end{align}

\begin{align}\label{dcs2}
&\frac{1}{4}\int_{0}^{t} \int_{0}^{1}<\partial_x u, \partial_x u>_g\int_{0}^{1}<R\left(\partial_x u, J\partial_x u\right) \partial_x u, J\partial_x u>_g\nonumber\\
&\lesssim \frac{1}{4} m\left(0\right) \int_{0}^{t}\int_{0}^{1} \lvert \partial_x u\rvert^4_g \nonumber \\   
&\lesssim \frac{1}{4} m\left(0\right) \int_{0}^{t}\| \partial_x \lvert\partial_x u \rvert^2_g\|^{2/3}_{L^2} \|\lvert \partial_x u\rvert^2_g\|^{4/3}_{L^1} + \|\lvert \partial_x u\rvert^2_g\|^{2}_{L^1}\nonumber\\
&\lesssim \frac{1}{2} m^{7/3}\left(0\right)\xi^{1/3}_1\left(t\right)t^{2/3}+\frac{1}{4}m^3\left(0\right)t
\end{align}

\begin{align}\label{dcs3}
&\frac{1}{4}\int_{0}^{t}\int_{0}^{1} \lvert <\left(\partial_x R\right) \left(\partial_x u, J\partial_x u\right) \partial_x u, J\partial_x u>_g\rvert\nonumber\\ 
&\lesssim\frac{1}{4}\int_{0}^{t}\int_{0}^{1} \lvert \partial_x u\rvert^5_g\nonumber\\
&\lesssim\frac{1}{4}\int_{0}^{t}\| \partial_x \lvert\partial_x u \rvert^2_g\|_{L^2} \|\lvert \partial_x u\rvert^2_g\|^{3/2}_{L^1} + \|\lvert \partial_x u\rvert^2_g\|^{5/2}_{L^1}\nonumber\\
&\lesssim \frac{1}{4}m^{5/2}\left(0\right)t + \frac{1}{2}m^{3/2}\left(0\right)\xi^{1/2}_1\left(t\right)t^{1/2}	
\end{align}

Besides, from \eqref{neiji1}, we get:
\begin{equation}\label{dy1}
	\xi_1\left(t\right) \leq \xi_2\left(t\right) + \frac{1}{8} \int_{0}^{t}\int_{0}^{1} \lvert<\partial_x u, \partial_x u>_g<R\left(\partial_x u, J\partial_x u\right) \partial_x u, J\partial_x u>_g\rvert
\end{equation}

Similarly, we obatin:
\begin{align}\label{dcs4}
&\frac{1}{8} \int_{0}^{t}\int_{0}^{1} \lvert<\partial_x u, \partial_x u>_g<R\left(\partial_x u, J\partial_x u\right) \partial_x u, J\partial_x u>_g\rvert\nonumber\\
&\lesssim\frac{1}{8}\int_{0}^{t}\int_{0}^{1} \lvert \partial_x u\rvert^6_g\nonumber\\
&\lesssim\frac{1}{8}\int_{0}^{t} \| \partial_x \lvert\partial_x u \rvert^2_g\|^{4/3}_{L^2} \|\lvert \partial_x u\rvert^2_g\|^{5/3}_{L^1} + \|\lvert \partial_x u\rvert^2_g\|^{3}_{L^1}\nonumber\\
&\lesssim\frac{1}{8}m^3\left(0\right)t + \frac{1}{4}m^{5/3}\left(0\right)\xi^{2/3}_1\left(t\right)t^{1/3}
\end{align}

Combining \eqref{dcs}, \eqref{dcs1}, \eqref{dcs2}, \eqref{dcs3}, \eqref{dy1}, and \eqref{dcs4}, we have:
\begin{align}\label{bts2}
\xi_1\left(t\right)&\lesssim \frac{1}{8}m^3\left(0\right)t + \frac{1}{4}m^{5/3}\left(0\right)\xi^{2/3}_1\left(t\right)t^{1/3}+2m\left(0\right)\sup\limits_{t} E\left(t\right)t\nonumber\\
&+\frac{1}{2} m^{7/3}\left(0\right)\xi^{1/3}_1\left(t\right)t^{2/3}+\frac{1}{4}m^3\left(0\right)t\nonumber\\
&+m\left(0\right)\left(\sup\limits_{ t}E^{1/2}\left(t\right)m^{1/2}\left(0\right)+\frac{1}{4}m^{5/2}\left(0\right)t + \frac{1}{2}m^{3/2}\left(0\right)\xi^{1/2}_1\left(t\right)t^{1/2}\right)
\end{align}

Combining \eqref{bts1}, \eqref{bts2},  and using the continuous induction method and  standard bootstrap arguments, we know there exists a $T^1=T^1\left(m\left(0\right)\right)$, and when $t\in \left[0, T^1\right)$, we have:
\begin{equation}
	\begin{aligned}
    E\left(t\right) \leq C\left(m\left(0\right)\right)\left(E\left(0\right)+1\right).
	\end{aligned}
\end{equation}

From the local well-posedness results of Ding-Wang \cite{Ding 2001}, we can use  the standard interval decomposition method and extension method to obtain a global solution $u \in C^0 \left(\left[0, \infty \right), H^2\left(\mathbb{S}^1, N\right)\right)$.

\par{\bf Acknowledgement}:

This work was supported by the National Natural Science Foundation of China (No. 12171097), the Key Laboratory of Mathematics for Nonlinear Sciences (Fudan University), Ministry of Education of China, P.R.China. Shanghai Key Laboratory for Contemporary Applied Mathematics, School of Mathematical Sciences, Fudan University, P.R. China, and by Shanghai Science and Technology Program [Project No. 21JC1400600].

\par{\bf Data Availability}:

The manuscript has no associated data.

\par{\bf Declarations}:

\par{\bf Conflicts of interest}:

The authors state that there is no conflict of interest.

\end{document}